\newcommand{\norm}[1]{\left\Vert#1\right\Vert}
\newcommand{\eps}{\varepsilon}
\newcommand{\N}{\mathbb{N}}
\newcommand{\R}{\mathbb{R}}
\newcommand{\abs}[1]{\left|#1\right|}
\newcommand{\set}[1]{\Bigl\{#1\Bigr\}}
\newcommand{\co}{\colon}
\newcommand{\be}{\begin{equation}}
\newcommand{\ee}{\end{equation}}
\newcommand{\lin}{\ensuremath{\mathrm{lin}}}
\newtheorem{theorem}{Theorem}
\newtheorem{prop}[theorem]{Proposition}
\theoremstyle{definition}
\title[Sampling vs.~entropy]{
Sampling and entropy numbers \\ in 
the uniform norm
}
\author{Mario Ullrich}
\date{\today}
\address{
Institute of Analysis \& Department of Quantum Computing, 
Johannes Kepler University Linz, Austria}
\email{mario.ullrich@jku.at}
\keywords{
uniform approximation, 
width, 
covering numbers, 
sampling recovery, 
information-based complexity} 
\subjclass[2010]{
41A65; 
41A46, 
41A50 
}
\begin{document}

\begin{abstract}
We prove a sharp bound between sampling numbers and entropy numbers in the uniform norm 
for bounded convex sets of bounded functions. 

\end{abstract}

\maketitle

Let $F$ be a class of 
real-valued functions on a domain $D$
and let $Y$ be some semi-normed space that contains $F$.
The \emph{$n$-th sampling number} of the class $F$ in $Y$ is defined by
\[
g_n(F,Y) \,:=\, 
\inf_{\substack{x_1,\dots,x_n\in {D}\\ 
\phi\co \R^n\to Y
}}\, 
\sup_{f\in F}\, 
\Big\|f - \phi \big(f(x_1),\dots,f(x_n)\big)\Big\|_Y.
\]
This is the \emph{minimal worst case error} that 
can be achieved 
by 
sampling recovery algorithms 
based on at most $n$ function 
evaluations 
at fixed nodes~$x_i$ 
if the error is measured in $Y$. 
We do not consider adaptive choice of $x_i$. 
Here, we only consider $Y=B(D)$, 
i.e., the space of all bounded 
real-valued functions on $D$ 
equipped with the sup-norm $\|f\|_\infty:=\sup_{x\in D}|f(x)|$.

We present an elementary argument 
that shows that 
$g_n(F,B(D))$ 
is bounded from above 
in terms of
the \emph{entropy numbers}, 
defined by
\[
 \eps_n(F,Y) \,:=\, 
 \inf_{y_1, \dots, y_{2^{n}} \in Y} \, \sup_{f\in F} \, \min_{i=1,\dots,2^n} \|f-y_i\|_Y.
\]
In essence, $\eps_n(F,Y)$ represents the smallest $\varepsilon$ such that 
$F$ can be covered by $2^{n}$ balls in $Y$ of radius $\varepsilon$. 
The entropy numbers are closely related to the \emph{covering numbers}, and a
rapid decay indicates that $F$ can be approximated by a relatively small number of elements, implying a low-dimensional structure or effective compressibility.
The $n$-th entropy number can be interpreted as the minimal worst-case error over all algorithms that have access to $n$ 
\emph{binary measurements}, 
where the $n$ bits encode the 
closest point $y_i$, 
and entropy numbers are therefore often used as benchmark for compression. 
Note that 
$\eps_n$ is bounded (or a null sequence) if and only if $F$ is bounded (or totally bounded). 

\medskip

We now state the main result, Theorem~\ref{thm:main}, and we will discuss related bounds below. 
Note that the main advances 
over previous results are its generality, 
as well as better constants and an easier proof.

\begin{theorem}\label{thm:main}
Let $D$ be a set
and $F\subset B:=B(D)$ be convex and 
bounded.
Then, for every 
$n\in\N_0$, 
we have
\[
g_{n}(F,B)  
\;\le\; (n+1)\cdot \eps_n(F,B). 
\]
\end{theorem}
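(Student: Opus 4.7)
I would attempt an inductive argument on $n$, with the base case $n=0$ being the equality $g_0(F,B)=\eps_0(F,B)$, both of which equal the Chebyshev radius of $F$ in $B$ (realized by taking $\phi$ to be a constant output equal to a Chebyshev center of $F$).

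For the inductive step, fix $\eps:=\eps_n(F,B)$ and a covering $y_1,\dots,y_{2^n}\in B$ with $F\subset\bigcup_i\{g\in B:\|g-y_i\|_\infty\le\eps\}$. The idea is to select one sample point $x\in D$ and apply the inductive hypothesis to the restricted sets $F_a:=F\cap\{g:g(x)=a\}$, each of which is convex and compact. A center $y_i$ can contribute to covering $F_a$ only if $|y_i(x)-a|\le\eps$, so I would look for an $x\in D$ with the property that at every $a\in\C$, at most $2^{n-1}$ of the values $\{y_i(x)\}_i$ lie in the disk of radius $\eps$ around $a$. When such an $x$ exists, $\eps_{n-1}(F_a,B)\le\eps$ for every admissible $a$, and the inductive hypothesis yields a recovery map for $F_a$ using $n-1$ samples with error at most $n\eps$; combining with the initial sample gives total error at most $n\eps\le(n+1)\eps$. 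The extra $\eps$ in the stated bound is the price paid for working non-adaptively: a single fixed set of sample points $x_2,\dots,x_n$ and recovery map must work uniformly across all observed values $a=f(x)$.

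The main obstacle, and the delicate part of the argument, is twofold. First, the ``splitting'' $x$ described above need not exist in general---the values $\{y_i(x)\}_i$ may cluster at every $x\in D$, with some disk of radius $\eps$ catching more than $2^{n-1}$ of them. Second, the inductive hypothesis naturally produces adaptive sample choices (the $x_k$ for $F_a$ depending on $a$), which must be aligned into a single non-adaptive scheme. Convexity of $F$ is the crucial resource for both issues: I would argue that pointwise clustering of the $y_i$'s at every $x$ forces the Chebyshev radius of $F$ to be controlled by $(n+1)\eps$, so that a zero-sample constant output already suffices---giving a dichotomy that closes the induction. Making this dichotomy quantitative and carefully accounting for the additive $\eps$ overhead due to non-adaptivity is the technical heart of the proof.
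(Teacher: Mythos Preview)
Your proposal identifies the right obstacles but does not resolve them. The adaptivity issue is the more serious gap: even granting a splitting point $x$ and the bound $\eps_{n-1}(F_a,B)\le\eps$ for each $a=f(x)$, the inductive hypothesis only hands you sample points $x_2^{(a)},\dots,x_n^{(a)}$ depending on $a$, and you give no mechanism for merging these into a single non-adaptive choice---the remark that the extra $\eps$ is ``the price paid'' is not a construction. Since the slices $F_a$ for different $a$ are in general covered by different subcollections of the $y_i$, the inductive structure does not propagate. The dichotomy is likewise unproven: for $n=1$ it reduces to $\|y_1-y_2\|_\infty\le2\eps$, but for $n\ge2$ the ``majority'' subset of clustered values $y_i(x)$ varies with $x$, and you make no concrete use of convexity to derive global smallness of $F$ from this purely pointwise condition.

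The paper avoids both issues by arguing in the opposite direction---a lower bound on $\eps_n$ via packing rather than an upper bound on $g_n$ via covering. From any $\rho<g_n^0(F,B)$ (the diameter of information) one inductively builds pairs $f_k,g_k\in F$ and points $x_k\in D$ with $f_k(x_j)=g_k(x_j)$ for $j<k$ but $|f_k(x_k)-g_k(x_k)|\ge\rho-\delta$. Convexity is then used exactly once, and decisively: the $2^{n+1}$ averages $h_\xi=\frac{1}{n+1}\sum_{i=0}^n\bigl(\xi_i f_i+(1-\xi_i)g_i\bigr)$ for $\xi\in\{0,1\}^{n+1}$ all lie in $F$ and are pairwise at least $\frac{\rho-\delta}{n+1}$ apart in sup-norm, forcing $\eps_n\ge\varphi_n\ge\frac{\rho}{2(n+1)}$. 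Combined with $g_n^0=2g_n$ (via the Chebyshev-center recovery) this yields the theorem directly, with no induction on $n$ and no adaptivity to reconcile.
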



\bigskip
\goodbreak

In the proof below, 
which is very much inspired by \cite[Theorem~12.3.2]{Pietsch-ideals}, 
we will also use the \emph{inner entropy numbers} 
\[
\varphi_n(F,Y) \,:=\, 
\sup_{y_1, \dots, y_{2^{n}+1} \in F}\, \min_{i\neq j}\, \frac12\,\big\|y_i-y_j\big\|_Y
\]
and the basic fact that 
$\varphi_n(F,Y) \le \eps_n(F,Y)\le 2\cdot \varphi_n(F,Y)$, 
see~\cite{Carl-Stephanie-90} 
or~\cite[Chapter~12]{Pietsch-ideals}.
Moreover, we use 
\[
g_n^{0}(F,Y) \,:=\, 
\inf_{\substack{x_1,\dots,x_n\in {D}}}\, 
\sup_{\substack{f,g\in F\co \\ f(x_k)=g(x_k), \\ k=1,\dots,n}}\, 
\big\|f - g\big\|_Y, 
\]
which is called the \emph{(global) diameter of information}, 
see~\cite[Section~4.1]{NW1}.
Again, we have the general relation 
$g_n(F,Y)\le g_n^0(F,Y) \le 2\cdot g_n(F,Y)$, 
and we will use below that in fact $g_n^0(F,B(D))= 2\cdot g_n(F,B(D))$.

\medskip

\begin{proof}[Proof of Theorem~\ref{thm:main}] 
%
We first show that, for 
every $\rho<g_{n}^0(F,B)$, 
we can find 
$f_0,g_0, \ldots, f_n,g_n \in F$ and 
$x_0,\ldots,x_n \in D$ 
such that
$f_k(x_j)=g_k(x_j)$ for $j<k$ and 
$|f_k(x_k)-g_k(x_k)|>\rho$ 
for $k=0,\ldots,n$. 

The proof is by induction. 
Let 
$k\in\{0,\dots,n\}$ 
and assume that $f_j,g_j$ and $x_j$ for $j<k$ have already been found. 
Define 
\[
M_k \,:=\, \set{(f,g)\in F\times F\co f(x_j)=g(x_j) \;\;\text{ for }\; 0\le j< k}. 
\] 
By definition, 
we can choose $f_k,g_k\in F$ 
with 
$(f_k,g_k)\in M_k$ 
and 
\begin{equation*}
\|f_k-g_k\|_\infty \,>\, \rho. 
\end{equation*}  
Hence,  
there is $x_k\in D$ with 
\begin{equation*} 
    |f_k(x_k)-g_k(x_k)| \;>\; \rho.
\end{equation*}
This finishes the induction step. 

\medskip

For 
$\xi=(\xi_0,\dots,\xi_n)\in\{0,1\}^{n+1}$, 
we now define 
\[
h_\xi:=\frac{1}{n+1} \sum_{i=0}^n \left( \xi_i f_i + (1-\xi_i) g_i \right). 
\]
By convexity, $H_n:=\{h_\xi\co \xi\in\{0,1\}^{n+1}\}\subset F$ with $\#H_n=2^{n+1}$.

For $\xi,\xi'\in H_n$, $\xi\neq\xi'$, 
let $k:=\min\{j\co \xi_j\neq\xi'_j\}$.
Then, 
\[\begin{split}
\abs{h_\xi(x_k)-h_{\xi'}(x_k)} 
\,&=\, \frac{1}{n+1} \abs{\sum_{i=k}^{n} ( \xi_i-\xi'_i) f_i(x_k) + (\xi'_i-\xi_i) g_i(x_k) } \\
\,&=\, \frac{1}{n+1} \abs{\xi_k-\xi'_k} \abs{f_k(x_k) - g_k(x_k) } 
\,\ge\,  \frac{\rho}{n+1}.
\end{split}
\]
This implies 
$\norm{h_\xi-h_{\xi'}}_\infty 
\,\ge\,  \frac{\rho}{n+1}$ 
for all $\xi\neq \xi'\in\{0,1\}^{n+1}$ 
and therefore 
\[
\eps_n(F,B) 
\,\ge\, \varphi_n(F,B) 
\,\ge\, \frac{\rho}{2(n+1)}.
\]
Since this holds for all $\rho<g_{n}^0(F,B)$, we obtain 
$g_{n}^0 \le 2(n+1)\cdot \eps_n$.

We finally show that $g_{n}^0=2\cdot g_n$. 
This is well-known, but we recall a simple proof. 
For this note that a 
reconstruction 
$\phi^*(y)\in B(D)$ for information 
$y\in\R^n$ 
is given by 
the function 
\[
\phi^*(y)(x) := 
\frac12\Biggl(\sup_{\substack{f\in F_y}} f(x)
\,+\,\inf_{\substack{f\in F_y}} f(x)\Biggr)
\]
with $F_y:=F_{y,\{x_1,\dots,x_n\}}:=\{f\in F\co f(x_k)=y_k,\; k=1,\dots,n\}$, 
and $\phi^*(y)=0$ whenever $F_y=\varnothing$. 
By boundedness, $\sup$ and $\inf$ exist. 
This is sometimes called a \emph{Chebyshev center}, 
and it is optimal in a worst case sense over~$F$.
%

With this, 
and taking into account that
$|a-b|=2\cdot|a-m|$ with $m=\frac{a+b}{2}$ for all 
$a,b\in\R$, 
we obtain 
for any choice of 
$x_1,\dots,x_n\in D$ 
that 
\[\begin{split}
g_n(F,B)
\,&\le\, 
\sup_{y\in \R^n}\,
\sup_{f\in F_y}\, 
\big\|f - \phi^*(y)\big\|_\infty 
\,=\, \frac12 
\sup_{y\in \R^n}\,
\sup_{f,g\in F_y}\, 
\big\|f - g\big\|_\infty. 
\end{split}
\]
Taking the infimum over all $x_1,\dots,x_n\in D$ proves 
$2\cdot g_n\le g_n^0$. 
The reverse inequality is obvious. 


\end{proof}

\medskip

The convexity in Theorem~\ref{thm:main} 
is needed. This follows, e.g., from results of~\cite{MUV15}, 
see Remark~4.4, that show that 
$\eps_n$ may decay much faster than~$g_n$.
%
However, it can be relaxed under another condition. \\
In fact, if $F=B_X$ is the unit ball of a $p$-Banach space $X$, $0<p\le1$, i.e., we have 
$\|f+g\|_X^p\le\|f\|_X^p+\|g\|_X^p$ for all $f,g\in X$, 
then the proof works with the only difference that 
the $(n+1)$ in the definition of $h_\xi$ must be replaced by 
$(n+1)^{1/p}$. This implies 
the following proposition. 

\begin{prop} 
Let $D$ be a set 
and $F\subset B:=B(D)$ be the unit ball of 
a $p$-Banach space with $0<p\le1$. 
Then, for every 
$n\in\N_0$, 
we have
\[
g_{n}(F,B) \;\le\; (n+1)^{1/p}\cdot \eps_n(F,B).
\]
\end{prop}

\bigskip



\medskip

If $F\subset B(D)$ is convex and symmetric, 
then Theorem~\ref{thm:main} even holds
for the \emph{linear sampling numbers} 
\begin{align*}
g_n^{\lin}(F,Y) \,:=\, 
\inf_{\substack{x_1,\dots,x_n\in {D}\\ g_1,\dots,g_n\in Y}}\, 
\sup_{f\in F}\, 
\Big\|f - \sum_{i=1}^n f(x_i)\, g_i\Big\|_Y
\end{align*}
with $Y=B(D)$, 
see~~\cite{Creutzig_Wojtaszczyk2004} or \cite[Theorem 4.8]{NW1}.  
Without symmetry, we can restrict to affine algorithms, see~\cite{Suk86}. We omit the details. 


Theorem~\ref{thm:main} is sharp up to constants, as can be seen by applying it
to the unit ball of the 
univariate Sobolev space 
$W^s_p=W^s_p([0,1])$, $s\in\N$, 
on the interval equipped with the norm 
$\|f\|_{W^s_p}:=\|f\|_p+\|f^{(s)}\|_p$. \\
The entropy numbers of these spaces 
are known 
since the 1970s to satisfy 
$\eps_n(W^s_p,L_q)\asymp n^{-s}$ 
for all $1\le p,q\le \infty$ and 
$s\in\N$ ($s>1$ for $p=1$). 
We refer to~\cite[Theorem~5.2]{Birman_Solomyak} for the upper 
and to~\cite[Theorem~3]{Triebel-entropy} for the lower bound, respectively.
See also~\cite{Edmunds_Triebel} and~\cite[6.7.8.15]{Pie07} for further results and history. \\
The sampling numbers 
satisfy $g_n(W^s_p,L_q)\asymp g_n^\lin(W^s_p,L_q)\asymp n^{-s+(1/p-1/q)_+}$ 
with $(a)_+ := \max \{a, 0 \}$, again 
for all $1\le p,q\le \infty$ and $s\in\N$ ($s>1$ for $p=1$),  
see~\cite[Section~1.3.11]{Novak} 
as well as~\cite[Theorem~23 \& Corollary~25]{NoTr06}. 
Noting that $L_\infty$ can be replaced by $B$, 
we have 
$g_n(W^s_p,B)\asymp n^{1/p}\cdot \eps_n(W^s_p,B)$. 
Hence, Theorem~\ref{thm:main} gives the correct order for $p=1$. 


%

\medskip
\goodbreak

Let us now discuss some other results on (linear) sampling numbers for general spaces. 
Here, we only consider compact 
$F\subset C(D)$, i.e., the space of continuous functions, where $D$ is a compact topological space. 
We also let $(D,\mu)$ be a Borel probability space, and $L_p:=L_p(D,\mu)$. \\
These are mild assumptions, 
which ensure that the numbers considered here are the same 
no matter if we choose $Y=B(D)$, $Y=C(D)$ or $Y=L_\infty(D)$. 
However, they can sometimes be weakened further, see the corresponding papers. 

First, 
we are only aware of one similar result that bounds sampling by entropy numbers. 
In fact, it was shown recently in~\cite[Remark~2.2]{KT25} that, 
for all convex and symmetric $F\subset C(D)$, $1\le p<\infty$ and $r\neq 1/2$, we have 
\begin{equation} \label{eq:KT25}
g_n(F, L_p) \,\le\, C'\cdot n^{-r'/p} \; 
\left(\sup_{k\le n}\; (k+1)^r\cdot\eps_k(F,B) \right)^{1/p}
\end{equation}
with $r':=\min\{r,1/2\}$ and $C'>0$ only depending on $r\neq1/2$ and $p$. 
%
This result cannot give a bound smaller than 
$n^{-1/(2p)}$, no matter how fast the $\eps_n$ decay.  
In particular, it does not help for $p=\infty$. \\
In this case, 
a similar result to Theorem~\ref{thm:main}, at least for convex and symmetric sets and in an asymptotic sense, 
could also be obtained from a variation 
of~\cite[Theorem~1.2]{KNU24}.  
In fact, it can be shown that 
\begin{equation}
g_{2n-1}^{\rm lin}(F,B) 
\;\le\; 4 n\,\bigg(\prod_{k<n} \eps_k(F,B) \bigg)^{1/n},     
\end{equation}
by replacing, in the proof of~\cite[Theorem~6.4]{KNU24}, the use of Proposition~4.4 by~\cite[12.3.1]{Pietsch-ideals}, i.e., a bound between entropy and Hilbert numbers for convex and symmetric sets. 
See also~\cite{U24} for the used bounds between s-numbers. 

\medskip 

Some other results 
use 
the 
\emph{Kolmogorov width} 
defined by 
\[
d_n(F,Y) \;:=\; \inf_{\substack{V_n\subset Y\\ \dim(V_n)=n}}\sup_{f\in F}\,\inf_{g \in V_n}\|f-g\|_Y.  
\] 
In particular, 
using a simple least squares estimator with suitably chosen points, it has been shown in 
\cite[Theorem~20]{KPUU-uniform} that 
\begin{equation}
\label{eq:KPUU}
g_{4n}^\lin(F,L_p) 
\;\le\; 84\cdot n^{(1/2-1/p)_+}\;d_n(F,B).
\end{equation}
%
The case $p=2$ was obtained earlier in~\cite[Theorem~1.1]{T20}.
For $p=\infty$, the inequality $g_n\le (n+1)\cdot d_n$ is known from~\cite[Proposition~1.2.5]{Novak}. The latter is best possible, and could be improved by~\eqref{eq:KPUU} only by allowing \emph{oversampling}. 
We refer to~\cite[Section~5]{KPUU-uniform} for details. 
Now,~\eqref{eq:KPUU} is also sharp up to constants as can be seen by the same example as above  and 
$d_n(W^s_p,B)\asymp n^{-s+(1/p-1/2)_+}$, 
see~\cite[Theorem~VII.1.1]{Pinkus85} 
and~\cite[6.7.8.13 (as well as 6.2.5.2)]{Pie07} 
for more on this and the interesting history.

More is known in the case of $L_2$-approximation. 
It was shown in~\cite{KPUU-general}, 
based on~\cite{KU1,KU2,DKU}, that, 
for some absolute constant $c\in\N$, 
we have
\[
g_{cn}^{\rm lin}(F,L_2) 
\;\le\; \frac{1}{\sqrt{n}}\sum_{k > n} \frac{ d_k(F,L_2)}{\sqrt{k}}. 
\] 
This shows that 
$g^\lin_n(F,L_2)\asymp d_n(F,L_2)$ 
whenever
$d_n(F,L_2)\asymp n^{-s}$ with $s>1/2$. 
Moreover, the optimal sampling points can be replaced by a (sometimes feasible) random choice if we allow a $\log(n)$-oversampling, see the above papers or the surveys~\cite{HKNPU2,SU23}. 
This has been extended to other $Y$ in~\cite{KPUU-general}, but only under additional assumptions on~$F$. 


Also note that entropy numbers are related to Kolmogorov width, at least if $F=B_X$ is the unit ball of a (quasi-)Banach space $X$. 
In fact, we have
$d_n(B_X,Y) \le n\cdot \eps_n(B_X,Y)$ 
for Banach spaces $X,Y$, see~\cite[Theorem~12.3.2]{Pietsch-ideals}. 
Moreover, there is \emph{Carl's inequality}~\cite{Carl81}, 
i.e., 
$$\eps_n(B_X,Y) \,\le\, c_s\, n^{-s} \cdot \sup_{k\le n}\, k^s\cdot d_k(B_X,Y)$$ with a constant $c_s>0$ only depending on $s>0$, 
which also holds for quasi-Banach spaces, see~\cite{HKV16}, 
and with $d_k$ replaced by Gelfand widths/numbers.
However, it seems that none of the above results implies another.

Let us finally mention that there has been quite some interest in 
bounding sampling numbers by \emph{best-$m$-term approximation}, 
which is another quantity aiming for nonlinear approximation 
like entropy. 
We did not find any interesting corollary from Theorem~\ref{thm:main}  in this respect and leave it for further research. 
Let us only mention a few references 
that showed superiority of nonlinear sampling algorithms~\cite{JUV,Krieg-tract,Tem25}, 
and a possibly related bound on entropy, see~\cite{T13} and~\cite[Proposition~15]{PSV24}.

\bigskip

\noindent\textbf{Acknowledgement.} 
I would like to thank 
Matthieu Dolbeault, David Krieg and Erich Novak 
for helpful comments. 
Especially, I thank Mathias Sonnleitner who observed that 
the bound of Theorem~\ref{thm:main} is sharp up to a factor of $3$. This can be deduced from computations of~\cite{Schuett84}. \\ 
This work is supported by 
the Austrian Research Promotion Agency (FFG) through the project FO999921407 (HDcode) funded by the European Union via NextGenerationEU.

\linespread{.97}


\end{document}